\documentclass[12pt]{amsart}
\usepackage{amsmath,amssymb,amsbsy,amsfonts,%
latexsym,amsopn,amstext,%
amsxtra,euscript,amscd}
\usepackage{url}
\usepackage{color}
\usepackage{float}
\paperheight297mm
\headsep=15pt
\textwidth 14cm
\textheight 245mm
\topmargin -23pt    
\evensidemargin  1.1cm
\oddsidemargin 1.1cm
\setlength{\footnotesep}{10pt}

\usepackage{comment}
\begin{document}

\newtheorem{theorem}{Theorem}
\newtheorem{lemma}[theorem]{Lemma}
\newtheorem{claim}[theorem]{Claim}
\newtheorem{cor}[theorem]{Corollary}
\newtheorem{prop}[theorem]{Proposition}
\newtheorem{question}[theorem]{Question}
\newtheorem{remark}[theorem]{Remark}
\newtheorem{setting}[theorem]{Setting}
\newcommand{\hh}{{{\mathrm h}}}
\newcommand{\N}{\mathbb{N}}
\newcommand{\Q}{\mathbb{Q}}
\newcommand{\F}{\mathbb{F}}
\newcommand{\Z}{\mathbb{Z}}
\newcommand{\R}{\mathbb{R}}
\newcommand{\C}{\mathbb{C}}
\newcommand{\eps}{\varepsilon}
\newcommand{\NN}{\mathcal{N}}
%
\newcommand{\uq}{\mathbf{q}}
\newcommand{\ur}{\mathbf{r}}
\newcommand{\uh}{\mathbf{h}}
\newcommand{\ux}{\mathbf{x}}
\newcommand{\uj}{\mathbf{j}}
\newcommand{\md}{\textup{~mod~}}
\newcommand{\ph}{\varphi}

\numberwithin{equation}{section}
\numberwithin{theorem}{section}
\numberwithin{table}{section}

\title[Large sieve estimate for multivariate polynomial moduli]%
{Large sieve estimate for multivariate polynomial moduli and applications}
\date{\today}

\author{Karin Halupczok \and
  Marc Munsch}
  
\newcommand{\Addresses}{{
  \bigskip
  \footnotesize

  \medskip

  \textsc{Karin Halupczok, Mathematisches Institut der
 Heinrich-Heine-Universit\"{a}t D\"{u}sseldorf,
Universit\"{a}tsstra\ss{}e 1,
D-40225 D\"{u}sseldorf, Germany}\par\nopagebreak
  \textit{E-mail address:} \texttt{karin.halupczok@uni-duesseldorf.de}

   \textsc{Marc Munsch,  Institut f\"{u}r Analysis und Zahlentheorie 
8010 Graz, Steyrergasse 30, Graz, Austria}\par\nopagebreak
  \textit{E-mail address:} \texttt{munsch@math.tugraz.at}

}}

\footnotetext{
2010 Mathematics Subject Classification. 
Primary: 11B57, 11L07, 11N32. Secondary: 11C08, 11N36. \\
Key words and phrases. 
Large sieve, polynomial of several variables, congruence equations, Vinogradov mean value theorem, Bombieri--Vinogradov theorem, primes in polynomial progressions.}

\begin{abstract}
  We prove large sieve inequalities with multivariate polynomial moduli and deduce a general Bombieri--Vinogradov type theorem for a class of polynomial moduli having a sufficient number of variables compared to its degree. This sharpens previous results of the first author in two aspects: the range of the moduli as well as
  the class of polynomials which can be handled. 
  As a consequence, we deduce that there exist infinitely many primes $p$
  such that $p-1$ has a prime divisor of size $\gg p^{2/5+o(1)}$
  that is the value of an incomplete norm form polynomial.
\end{abstract}

\bibliographystyle{plain}
\maketitle

\section{Introduction}

The large sieve in its arithmetic form was introduced by Linnik in the early 40s to bound the number of exceptions to Vinogradov's conjecture on the size of the least quadratic non-residue. Since then it has been the object of intensive study by mathematicians such as R\'{e}nyi, Roth, Bombieri and others leading to the modern presentation in its analytic form called large sieve inequality. The classic form of the large sieve inequality and its generalizations is an extremely powerful tool and has many applications in analytic number theoretical problems. For instance, as a consequence of a high dimensional version of the large sieve, Gallagher
\cite{Gall}
proved the  ‘generic’ irreducibility and maximality of the Galois group for integral polynomials with bounded height. A more recent focus involves the large sieve over sparse sequences of moduli, such as powers or more general polynomials (see also \cite{MS} for results on arbitrary sparse sequences). Obtaining large sieve inequalities in this context has various applications in diverse arithmetic problems. Without being exhaustive, let us mention few examples such as the distribution of primes in sparse 
 progressions~\cite{BZsparse,BakerPP1},  the existence of shifted primes divisible by a large square~\cite{Matomaki,Mer},  elliptic curves~\cite{BPS,SZell} or the study of Fermat quotients~\cite{FermatBFKS,IgorFermat}. \\ 
 
 The case of moduli defined as values of polynomials of several variables received much less attention. However, recently the first author obtained a general result in \cite{Karinquart} using  multidimensional Weyl sum estimates and highlighted several possible applications to problems related to the multiplicative structure of consecutive integers.\\  
 
 Recently, the second author \cite{Munsch} improved in some range of the parameters the existing large sieve inequalities with polynomial moduli in one variable. A crucial ingredient was the use of bounds on the number of solutions to polynomial equations in boxes. In Section \ref{multsieve} we explain how this idea can be adapted in our multivariable setting to give large sieve bounds which are sharper than those of \cite{Karinquart} in the case of a general multivariate polynomial. Additionally, these multidimensional estimates allow us to deduce interesting consequences. As an application, we obtain in Section \ref{BVsection} a Bombieri--Vinogradov type theorem for a general class of polynomial moduli having a sufficiently large number of variables compared to the degree. The proof involves a very careful splitting of the range of the parameters where both classical large sieve and our new results are needed. In Section \ref{adm}, we discuss an interesting choice of polynomials that can be made and in particular, the existence of infinitely many primes $p$ such that $p-1$ is divisible by certain divisors of multidimensional polynomial shape.

 \section{Multidimensional polynomial large sieve}\label{multsieve}
 \subsection{Notation and conventions}
Throughout the paper, the notation $U = O(V)$, 
$U \ll V$ and $ V\gg U$  are equivalent to $|U|\leqslant c V$ for some positive constant $c$, 
which depends on the degree of the polynomials involved and, where applies, on the coefficients of the polynomials. 

For any quantity $V> 1$ we write $U = V^{o(1)}$ (as $V \to \infty$) to indicate a function of $V$ which 
satisfies $ V^{-\varepsilon} \le |U| \le V^\varepsilon$ for any $\varepsilon> 0$, provided $V$ is large enough. One additional advantage 
of using $V^{o(1)}$ is that it absorbs $\log V$ and other similar quantities without changing  the whole 
expression.  \\

\subsection{Setting of the problem}

  In this section we consider a polynomial $P \in
\mathbb{Z}[X_1,\dots,X_\ell]$ in $\ell$ variables of total degree $k \geq
2$. For a real number $Q\geq 1$, we consider $\ell$- tuples
$\mathbf{q}=(q_1,\dots,q_\ell) \sim Q$ where $\mathbf{q} \sim Q$ means
that $\mathbf{q}$ is in the dyadic $Q$- box
$\displaystyle{\prod_{i=1}^{\ell}[Q,2Q)}$. Let $\{a_n\}_{n\geq 1}$ denote a
sequence of complex numbers and $M,N$ positive integers.  We define
 
 \[\sum_{N,Q,P}:=\sum_{\mathbf{q} \sim Q} \sum_{\substack{1\leq a \leq
   P(\mathbf{q}) \\ (a,P(\mathbf{q}))=1}} 
  \left\vert S\left(\frac{a}{P(\mathbf{q}))}\right)\right\vert^2\] 
where as usual
  \begin{equation}
\label{eq:eqq}
S(\theta):= \sum_{M<n\leq M+N} a_n \mathrm{e}(n\theta)
\end{equation}  and  $\mathrm{e}(z)=\exp(2i\pi z)$ for $z \in \mathbb{C}$. Our main goal is to obtain large sieve inequalities which are bounds of the shape
\begin{equation}\label{largesieve} \sum_{N,Q,P} \ll 
  \Delta_{k,\ell}(Q,N) \sum_{n=M+1}^{M+N} \vert a_n\vert^2,
 \end{equation} where $\Delta_{k,\ell}(Q,N)$ is some function of the parameters $N$ and $Q$ (which could both depend on $k$ and $\ell$) and the implied constant may also depend on the parameters $k$ and $\ell$.   The reader may notice that repetitions are allowed in the definition of $\sum_{N,Q,P}$ meaning that it is possible to have $P(\mathbf{q})=P(\mathbf{q'})$ for different $\ell$- tuples $\mathbf{q}$ and $\mathbf{q'}$ \footnote{We could have stated our results removing repetitions or weighting every moduli by the inverse of the number of appearances. For our purpose these formulations are essentially equivalent due to the polynomials being considered in applications where we essentially have very small preimages.}. To state our results we need to introduce the following function which counts the number of representations of an integer by a polynomial,
 \begin{equation}\label{numberrep}
   r_{P}(m,Q):= \#\{\mathbf{q} \sim Q;\ P(\mathbf{q})=m\},  \end{equation} 
 and the maximum over a dyadic box,
 \begin{equation}\label{maxnumberrep}
   r_{P}^*(Q):=  \max_{\mathbf{q} \sim Q}\left\{r_{P}(P(\mathbf{q}),Q)\right\}.
 \end{equation}  Several estimates on $\sum_{N,Q,P}$ already follow from the classic large sieve inequality that we recall now. A set of real numbers $\{x_k;\ k =1, \ldots,K\}$, is 
called {\it $\delta$-spaced modulo $1$\/} if $ \|x_k-x_j\|  \geq \delta$ for all $1 \le j < k\le K$,  where $\|x\|$ denotes the distance of a real number $x$ to its closest integer. Then by a result of Montgomery and Vaughan~\cite[Theorem~1]{MoVau}, we have 
\begin{equation}\label{eq:classic} 
\sum_{k=1}^K \left\vert \sum_{n=M+1}^{M+N} a_n \mathrm{e}(x_k n)\right\vert^2 \leq (\delta^{-1}+N)  \sum_{n=M+1}^{M+N} \vert a_n\vert^2.
\end{equation}

Similarly as observed by Zhao in \cite{Zhaoacta}, we remark that the inequality \eqref{eq:classic}
implies \eqref{largesieve} with
\begin{equation}\label{eq:trivialdelta} 
\Delta_{k,\ell}(Q,N) = \min\left\{r_{P}^*(Q)(Q^{2k}+N),Q^\ell(Q^k+N)\right\}.
\end{equation}  Due to the possible repetitions of the moduli $P(\mathbf{q})$, the factor $r_{P}^*(Q)$ appears in the inequality. This quantity is relatively harmless (say $r_{P}^*(Q)=Q^{o(1)}$) for the choice of polynomials appearing in  our applications (see Section\ \ref{adm} for details). 
Furthermore, heuristics from Zhao in \cite{Zhaoacta} make
clear that conjecturally, we shall have in the $\ell$-dimensional case
\begin{equation}
  \label{eq:Zhaoconj}
  \Delta_{k,\ell}(Q,N) = (QN)^{o(1)}r_{P}^*(Q)(Q^{\ell+k}+N),
\end{equation}
but we are still far away from proving this conjecture even for polynomials in one variable.
It was conjectured in \cite{AAKarin} that for the one-dimensional
case $\ell=1$, one should be able to reach
\[
  \Delta_{k,1}(Q,N)=(Q^{k+1}+Q^{1+1/(k-1)} N^{1-1/k(k-1)})(QN)^{o(1)}.
\]
This was proved by the second author as \cite[Thm.~1.2]{Munsch}
with  $k+1$ instead of $k-1$,
thus confirming \cite[Conjecture~21]{AAKarin} with $\omega=1/k(k+1)$.

Some improvements over the bound \eqref{eq:trivialdelta} have been obtained
by the first author using Weyl sums estimates and
the recent breakthroughs in Vinogradov's mean value theorem from \cite{BDG,Woo} and its generalizations 
to general polynomials of several variables \cite{PPW}.
More precisely it was proved in \cite[eq.~(5)]{Karinquart} that \eqref{largesieve} holds with
\begin{equation}
  \label{eq:Oldpolyls}
  \Delta_{k,\ell}(Q,N)=(Q^{\ell(k+1)} + Q^{\ell-1/2r_0\ell k}N
    + Q^{\ell+1/2r_0} N^{1-1/2r_0\ell k})(QN)^{o(1)},
  \end{equation}
where $r_0=\binom{\ell k+\ell-1}{\ell}-1$. 
Adapting the methods of \cite{BDG}, the recent impressive work \cite{GuoZh} extends \cite{PPW} and essentially
removes the factor $2$ from the bound in the multidimensional 
version of Vinogradov's mean value theorem. \footnote{The result is slightly more complicated to state and gives the expected number of solutions to multidimensional Vinogradov systems. The correcting factor for applications depends also on the number of variables $\ell$ but is very close to $2$.} By this result,
the factor $2$ in the exponent of \eqref{eq:Oldpolyls} 
could also be deleted, leading already to a slight improvement. 

In this note we go further and 
generalize the technique used in \cite{Munsch}, which leads
to a substantial improvement of the polynomial large sieve
inequality in several variables.
The best available bound of \cite{GuoZh}
in the multidimensional Vinogradov's mean value theorem
serves in the next section as an ingredient.

\subsection{Modular equations in boxes}\label{modular}
 
 To prove our main result, we need the following result
of Kerr \cite[Thm.~3.1]{Kerrboxes} which bounds the number of
solutions to multidimensional polynomial equations in boxes. This is a generalization of a result for polynomials in one variable \cite[Theorem $1$]{CGOS}.
We consider a polynomial of degree $k \geq 2$,
\[P(\mathbf{x}) =
  \sum_{0\leq \vert\mathbf{i}\vert \leq k} \alpha_\mathbf{i} \mathbf{x}^\mathbf{i},
  \ \alpha_\mathbf{i} \in \mathbb{Z}_{m},\] such that
\begin{equation}\label{defh} h_P:= \min_{\vert \mathbf{i}\vert =k}
  \vert \alpha_\mathbf{i} \vert=1, \end{equation}
where $\vert \mathbf{i}\vert$ is the sum of the components of
$\mathbf{i}\in\N_{0}^{\ell}$, and
$\mathbf{x}^{\mathbf{i}}=x_{1}^{i_{1}}\cdots x_{\ell}^{i_{\ell}}$.
Given any positive $K_1,\dots,K_\ell,L,H,R \geq 1$ and $(a,m)=1$,  we define by
$N(H,R;K_{1},\dots,K_{\ell},L)$ the number of solutions to the congruence
\begin{equation}\label{defsol}
 a P(\mathbf{x}) \equiv y \ (\md m),
\end{equation} where 
\[ (\mathbf{x},y) \in \prod_{i=1}^{\ell} [K_i+1,K_i+H] \times [L+1,L+R].\] 
\begin{lemma}\label{Kerrlem}
The following bound holds uniformly over arbitrary integers $K_{1},\dots,K_{\ell}$
and $L$:
\begin{equation}\label{Kerr} N(H,R;K_{1},\dots,K_{\ell},L) \ll H^\ell
\left((R/m)^{1/r(k+1)} +
(R/H^k)^{1/r(k+1)}\right)m^{o(1)}, \end{equation} where 
\begin{equation}
  \label{eq:rdef}
  r= {k+\ell \choose \ell}-1.
\end{equation}
\end{lemma}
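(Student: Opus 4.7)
The plan is to adapt the one-variable argument of~\cite{CGOS} to the multivariate setting, replacing the classical Vinogradov mean value theorem with the multidimensional version of~\cite{PPW} (in the sharper form of~\cite{GuoZh}). The proof has three main steps: a Fourier reduction modulo $m$, a high-moment bound reducing to counting solutions of a modular Vinogradov-type system, and optimization.

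First, by Fourier duality modulo $m$, one writes
\[
N(H,R;K_{1},\dots,K_{\ell},L) = \frac{1}{m}\sum_{h \bmod m} S(h)\,\overline{T(h)},
\]
where $S(h) = \sum_{\mathbf{x}}\mathrm{e}(haP(\mathbf{x})/m)$ ranges over the box $\prod_i[K_i+1,K_i+H]$ and $T(h) = \sum_y\mathrm{e}(hy/m)$ over $[L+1,L+R]$. The $h=0$ contribution produces the main term $H^\ell R/m$, which is absorbed in the first piece of the target bound. For $h\neq 0$, the elementary estimate $|T(h)|\ll\min(R,\|h/m\|^{-1})$ combined with H\"older's inequality reduces the problem to bounding the high moment $\sum_h |S(h)|^{2s}$, with $s=r(k+1)/2$ chosen at the critical exponent of the multidimensional Vinogradov system of degree $k$ in $\ell$ variables. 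Orthogonality in $h$ then identifies this moment with $m$ times the number of $2s$-tuples $(\mathbf{x}_i)$ in the box satisfying the single congruence $\sum_{i\leq s}P(\mathbf{x}_i)\equiv\sum_{i>s}P(\mathbf{x}_i)\pmod m$.

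The crucial step is to convert this single polynomial congruence into the full modular Vinogradov system via translation invariance. Writing $\mathbf{x}_i = \mathbf{K}+\mathbf{y}_i$ with $\mathbf{y}_i\in[1,H]^\ell$ and averaging over auxiliary shifts of the origin, one dominates the count by the number of solutions of the full system in the $r$ non-constant monomials $\mathbf{y}^{\mathbf{j}}$, $1\leq|\mathbf{j}|\leq k$. The hypothesis $h_P = 1$ is essential here: it ensures that some top-degree monomial of $P$ has a unit coefficient modulo $m$, preventing degeneration of the linear changes of variables. The bound of~\cite{GuoZh} applied to the resulting Vinogradov system, combined with the H\"older step and the optimal choice of $s$, then produces the two summands $(R/m)^{1/r(k+1)}$ (from the modular diagonal of the Vinogradov count) and $(R/H^k)^{1/r(k+1)}$ (from the integer Vinogradov contribution), each multiplied by $H^\ell m^{o(1)}$.

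I expect the main obstacle to be this translation-invariance step. Unlike the one-variable case of~\cite{CGOS}, where the symmetric-function manipulations reduce to essentially linear algebra in two variables, the multivariate monomial bookkeeping is substantially more intricate. One must carefully track the modular diagonal contributions to ensure the two terms emerge with the correct exponents, and verify that the unit-coefficient hypothesis $h_P=1$ propagates through the shift averaging so that the change of variables used to exhibit the Vinogradov system remains invertible modulo $m$.
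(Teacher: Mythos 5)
The paper itself does not prove this lemma: it is quoted directly as Kerr's Theorem~3.1 from \cite{Kerrboxes}, with the single observation that replacing the Parsell--Prendiville--Wooley input by the Guo--Zhang bound removes a factor $2$ from the exponent, turning $2r$ into $r$. So your task was really to reconstruct Kerr's proof, and you correctly identified its skeleton: Fourier expansion of the indicator modulo $m$, the elementary bound on the linear sum $T(h)$, H\"older with exponent $2s$ near the critical $r(k+1)$, orthogonality to reduce the moment $\sum_{h}|S(h)|^{2s}$ to a congruence count, and the VMVT input. That much is faithful to the argument.

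The delicate step is the one you flagged, and your description of it has a genuine logical problem. You write that one ``dominates the count by the number of solutions of the full system'' in the $r$ non-constant monomials. That inequality cannot hold as stated: imposing all $r$ Vinogradov constraints is strictly stronger than imposing the single constraint $\sum_{i\le s}P(\mathbf{x}_i)\equiv\sum_{i>s}P(\mathbf{x}_i)\pmod m$, so the full-system count is a lower bound, not an upper bound. What actually happens is an \emph{overcounting} (unfolding) argument: after translating the box to the origin, one conditions on the values $c_{\mathbf{j}}=\sum_{i\le s}\mathbf{y}_i^{\mathbf{j}}-\sum_{i>s}\mathbf{y}_i^{\mathbf{j}}$ for all monomials $\mathbf{j}\neq\mathbf{j}_0$ (about $H^{|\mathbf{j}|}$ choices each), observes that the polynomial congruence then pins down $c_{\mathbf{j}_0}$ modulo $m$ (here the hypothesis $h_P=1$ is exactly what gives invertibility of $\alpha_{\mathbf{j}_0}$, so there are only $O(1+H^k/m)$ admissible values), and bounds the count with the target $(c_{\mathbf{j}})$ fixed by the full Vinogradov count $J^{\mathrm{Vin}}(H)$ via the usual Cauchy--Schwarz/Fourier-positivity argument. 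It is the product of the multiplicity factor $\prod_{\mathbf{j}\neq\mathbf{j}_0}H^{|\mathbf{j}|}\cdot(1+H^k/m)$ with $J^{\mathrm{Vin}}(H)\ll H^{2s\ell-K+o(1)}$ that produces the two terms $H^{2s\ell}/m$ and $H^{2s\ell-k}$, and thence, after H\"older, $(R/m)^{1/r(k+1)}$ and $(R/H^k)^{1/r(k+1)}$. Your ``averaging over auxiliary shifts of the origin'' does not correspond to anything in the argument; a single translation suffices, and no averaging over shifts appears. Also, your labelling is reversed: the term $(R/H^k)^{1/r(k+1)}$ is the one coming from the integer (lift $t=0$) Vinogradov contribution, while $(R/m)^{1/r(k+1)}$ is the wraparound contribution from the roughly $H^k/m$ nonzero lifts. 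These are not merely cosmetic slips; as written, the dominating inequality in your ``crucial step'' is false, and one would need to replace it by the conditioning/overcounting argument to have a correct proof.
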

Note that in \cite[Thm.~3.1]{Kerrboxes}, this result was stated with $2r$
instead of $r$ in \eqref{Kerr} since the result
\cite{PPW} was used. Indeed Kerr made use of \cite[Theorem $1.1$]{PPW} asserting that Vinogradov's systems of equations have at most the expected number of solutions as soon as the number of variables is large enough. Now thanks to the improvement of \cite{PPW} in \cite{GuoZh},
the number of necessary variables can be reduced by a factor $2$ \footnote{Again the correcting factor is more complicated and depends on the number of variables but is just slightly larger than $2$. For a more precise description of the gain here, see \cite[Theorem $3.2$]{PPW} and the enlightening discussion following equation $(1.4)$ in the same paper. For our purpose and to simplify the exposition we correct by a factor $2$.}. This impacts the improved bound \eqref{Kerr},
and thus our work, too.

\subsection{Large sieve inequality for multivariate
  polynomials}

  To begin with, we standardly define a subset of
Farey fractions
\[ \mathcal{S}(Q):=\left\{ \frac{a}{P(\mathbf{q})}; \ 
(a,P(\mathbf{q}))=1,\ 1\leq a < P(\mathbf{q}),\ \mathbf{q} \sim
Q\right\}.
\]
We would like to count the number of such fractions which are close to
each other. For this purpose we define
\begin{equation}\label{numberclose} M(N,Q)= \max_{x \in
\mathcal{S}(Q)}\# \left\{y \in \mathcal{S}(Q);\ \|x-y\| <1/2N
\right\}.\end{equation} 
We obtain the following bound depending only on the number of
variables and the degree of the polynomial $P$.

\begin{lemma}\label{estimateclose} Let $\ell$ be a positive integer and $P$ a polynomial of degree $k \geq 2$ with $h_P=1$ and such that $P(\uq)\gg Q^{k+o(1)}$ for all $\uq\sim Q$. For any integer
$N$ with $Q^{k} \leq N \leq Q^{2k}$, we have
\begin{equation}\label{estimatefrac} M(N,Q) \ll (QN)^{o(1)}
  Q^{\ell+k/r(k+1)}N^{-1/r(k+1)}, \end{equation}
with $r$ as in \eqref{eq:rdef}.
 \end{lemma}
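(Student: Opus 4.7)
The plan is to mimic Munsch's one-variable argument from \cite{Munsch}, reducing the counting problem to the congruence count supplied by Lemma~\ref{Kerrlem}. First I would fix an extremal fraction $x = a/P(\uq) \in \mathcal{S}(Q)$ achieving the maximum in \eqref{numberclose}. Every competitor $y = b/P(\uq') \in \mathcal{S}(Q)$ with $\|x-y\| < 1/(2N)$ satisfies
\[
  \Bigl\vert a P(\uq') - b P(\uq) - t\, P(\uq) P(\uq')\Bigr\vert < \frac{P(\uq)P(\uq')}{2N}
\]
for some integer $t$, and the hypothesis $P(\uq)\gg Q^{k+o(1)}$ together with $P$ having degree $k$ forces $t$ to lie in a bounded range of size $O(1)$. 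Writing $c$ for the integer on the left-hand side, we get $|c| \leq R$ with $R \ll Q^{2k}/N$ and the congruence
\[
  a\, P(\uq') \equiv c \pmod{P(\uq)}.
\]
Since $\uq'$ and $c$ determine $b$ uniquely (via $b = (aP(\uq') - c - t P(\uq)P(\uq'))/P(\uq)$), counting such $y$ is bounded above by the number of pairs $(\uq',c)$ solving this congruence, summed over $O(1)$ values of $t$.

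Next I would apply Lemma~\ref{Kerrlem} to this congruence count with modulus $m = P(\uq) \asymp Q^k$, box size $H = Q$ for each coordinate of $\uq' \sim Q$, interval length $R \asymp Q^{2k}/N$ for the variable $c$, and the coefficient $a$ which is coprime to $m$ by the definition of $\mathcal{S}(Q)$. The hypothesis $h_P = 1$ is exactly what Kerr's theorem requires on the leading structure of $P$. The two terms in \eqref{Kerr} are
\[
  \Bigl(R/m\Bigr)^{1/r(k+1)} \asymp \Bigl(Q^{k}/N\Bigr)^{1/r(k+1)}, \qquad
  \Bigl(R/H^{k}\Bigr)^{1/r(k+1)} \asymp \Bigl(Q^{k}/N\Bigr)^{1/r(k+1)},
\]
which coincide, and the assumption $Q^{k} \leq N \leq Q^{2k}$ makes $R/m, R/H^{k} \leq 1$ so that the bound is non-trivial and $R \geq 1$. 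Multiplying by $H^{\ell} = Q^{\ell}$ and absorbing the $m^{o(1)} = Q^{o(1)}$ factor into $(QN)^{o(1)}$ yields
\[
  M(N,Q) \ll (QN)^{o(1)}\, Q^{\ell}\, (Q^{k}/N)^{1/r(k+1)} = (QN)^{o(1)}\, Q^{\ell + k/r(k+1)}\, N^{-1/r(k+1)},
\]
as required.

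The routine part is the Farey-fraction bookkeeping; the only places to be careful are verifying that $t$ stays bounded, that each target $y$ arises from only $O(1)$ admissible triples $(\uq',c,t)$, and that the parameter regime $Q^k \le N \le Q^{2k}$ is exactly what makes Kerr's bound both applicable ($R\ge 1$) and non-trivial ($R \le m, H^k$). The substantive input — and where essentially all of the difficulty is — is hidden inside Lemma~\ref{Kerrlem} itself, which rests on the multidimensional Vinogradov mean value theorem of \cite{GuoZh}; the present lemma is then simply the correct packaging of that estimate in the shape needed by the large sieve.
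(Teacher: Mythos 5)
Your proof is correct and follows essentially the same route as the paper's: reduce the Farey spacing condition to the congruence $a P(\uq') \equiv c \pmod{P(\uq)}$ with $|c| \ll Q^{2k}/N$ and $\uq' \sim Q$, then invoke Lemma~\ref{Kerrlem} with $H = Q$, $R = Q^{2k}/N$, $m = P(\uq)$. You are in fact slightly more careful than the paper in explicitly tracking the integer shift $t$ coming from the definition of $\|\cdot\|$ and noting it is $O(1)$, whereas the paper writes the identity $\|a/P(\uq) - b/P(\ur)\| = |aP(\ur)-bP(\uq)|/(P(\uq)P(\ur))$ without comment, which is justified only after absorbing that bounded shift as you do.
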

 
 \begin{proof} Let $x=\frac{a}{P(\mathbf{q})}$ a fixed reduced fraction.
   We aim to estimate the number of $(\ell+1)$- tuple $(b,\mathbf{r})$
   with $(b,\mathbf{r})=1$ such that
   \begin{equation}\label{maj}\left\|\frac{a}{P(\mathbf{q})}
       -\frac{b}{P(\mathbf{r})}\right\|
     =\frac{\vert aP(\mathbf{r})-bP(\mathbf{q})\vert}%
     {P(\mathbf{q})P(\mathbf{r})} <1/2N.  \end{equation}
   Setting $z=aP(\mathbf{r})-bP(\mathbf{q})$, our problem is equivalent
   to estimating the number of $(\ell+1)$- tuples $(b,\mathbf{r})$ such
   that $\vert z\vert \ll Q^{2k}/N$. This boils down to counting the number
   of $(\ell+1)$- tuples $(\mathbf{r},z)$. The number of solutions is
   bounded above by the number of tuples with $\mathbf{r}\sim Q$
   and $\vert z\vert \ll Q^{2k}/N$ which are solutions to the congruence 
   \begin{equation}\label{cong}aP(\mathbf{r})
     = z \ (\,\bmod\, P(\mathbf{q})).\end{equation}
   Applying Lemma \ref{Kerrlem} to the polynomial
   $P$ with parameters $H=Q$, $R=Q^{2k}/N$,
   and $m=P(\mathbf{q})$ we deduce that the numbers of $(\mathbf{r},z)$
   satisfying \eqref{cong} is bounded above by 
   \[Q^{\ell+\varepsilon}(Q^k/N)^{1/r(k+1)} \]
   for any $\varepsilon>0$. This concludes the proof.
 
\end{proof} 
We follow a routine method to prove large sieve
inequalities and deduce the following theorem.
 \begin{theorem}\label{generalbound}
   Let $\ell$ be a positive integer and $P$ a polynomial of degree $k \geq 2$
  with $h_P=1$. For any integer
$N$ with $Q^{k} \leq N \leq Q^{2k}$ we have 
\begin{equation}\label{power2} \sum_{N,Q,P}^* \ll (QN)^{o(1)}
Q^{\ell+k/r(k+1)}N^{1-1/r(k+1)}\sum_{n=M+1}^{M+N} \vert
a_n\vert^2, \end{equation} with $r$ as in \eqref{eq:rdef} and where $\sum_{N,Q,P}^*$ denotes the sum $\sum_{N,Q,P}$ restricted  over those $\mathbf{q}$ such that $P(\mathbf{q}) \gg Q^{k+o(1)}.$
\end{theorem}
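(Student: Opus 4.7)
The plan is to combine Lemma~\ref{estimateclose} (a bound on the local density of the Farey fractions $a/P(\uq)$) with the Montgomery--Vaughan large-sieve inequality \eqref{eq:classic} through the standard greedy-colouring argument. The restriction in $\sum_{N,Q,P}^{*}$ to those $\uq$ with $P(\uq)\gg Q^{k+o(1)}$ is designed precisely so that every fraction contributing to the sum sits on a denominator large enough for Lemma~\ref{estimateclose} to apply.

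Concretely, let $\mathcal{S}^{*}(Q)\subset\mathcal{S}(Q)$ be the subset coming from $\uq\sim Q$ with $P(\uq)\gg Q^{k+o(1)}$. For any $x=a/P(\uq)\in\mathcal{S}^{*}(Q)$, the proof of Lemma~\ref{estimateclose} goes through (the key input $P(\uq)P(\ur)\gg Q^{2k+o(1)}$ is guaranteed on $\mathcal{S}^{*}(Q)$), and it bounds the number of $y\in\mathcal{S}^{*}(Q)$ with $\|x-y\|<1/(2N)$ by
\[
  M^{*}(N,Q)\;\le\;M(N,Q)\;\ll\;(QN)^{o(1)}\,Q^{\ell+k/r(k+1)}\,N^{-1/r(k+1)}.
\]

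Next, form the graph on $\mathcal{S}^{*}(Q)$ whose edges join fractions at distance $<1/(2N)$ modulo~$1$. By the above its maximum degree is at most $M^{*}(N,Q)-1$, so a greedy colouring partitions $\mathcal{S}^{*}(Q)$ into $\le M^{*}(N,Q)$ classes, each $1/(2N)$-spaced modulo~$1$. Applying \eqref{eq:classic} with $\delta=1/(2N)$ on every class gives
\[
  \sum_{x\,\text{in class}}|S(x)|^{2}\;\le\;(2N+N)\sum_{n=M+1}^{M+N}|a_{n}|^{2}\;\ll\;N\sum_{n=M+1}^{M+N}|a_{n}|^{2}.
\]
Summing over the $\ll M^{*}(N,Q)$ classes and substituting the density bound yields
\[
  \sum_{N,Q,P}^{*}\;\ll\;M^{*}(N,Q)\cdot N\sum_{n=M+1}^{M+N}|a_{n}|^{2}\;\ll\;(QN)^{o(1)}\,Q^{\ell+k/r(k+1)}\,N^{1-1/r(k+1)}\sum_{n=M+1}^{M+N}|a_{n}|^{2},
\]
which is exactly \eqref{power2}.

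There is no real obstacle left at this stage: all the arithmetic content is hidden inside Lemma~\ref{estimateclose}, which itself relies on Kerr's bound (Lemma~\ref{Kerrlem}) and hence on the multidimensional Vinogradov mean value theorem of \cite{GuoZh}. The passage from the local-density estimate to the large-sieve inequality is entirely routine; the only subtle point is to notice that the restriction defining $\sum_{N,Q,P}^{*}$ both at the base point $\uq$ \emph{and} at the neighbour $\ur$ is exactly what makes the hypothesis $P(\uq)\gg Q^{k+o(1)}$ of Lemma~\ref{estimateclose} usable for pairs.
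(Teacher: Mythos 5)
Your argument is correct and is exactly the ``standard'' route the paper alludes to when it cites the proofs in Zhao and Munsch: bound the local multiplicity $M^{*}(N,Q)$ via Lemma~\ref{estimateclose}, use a greedy colouring of the near-collision graph to split $\mathcal{S}^{*}(Q)$ into $\le M^{*}(N,Q)$ classes each $1/(2N)$-spaced modulo~$1$, apply Montgomery--Vaughan \eqref{eq:classic} with $\delta=1/(2N)$ to each class, and sum. One small inaccuracy in your closing remark: the restriction $P(\uq)\gg Q^{k+o(1)}$ is only needed at the \emph{base point} $\uq$ (to make the term $(R/m)^{1/r(k+1)}$ in Kerr's bound $\ll(Q^{k}/N)^{1/r(k+1)}Q^{o(1)}$), not at the neighbour $\ur$ --- for $\ur$ one only uses the trivial upper bound $P(\ur)\ll Q^{k}$ to get $|z|\ll Q^{2k}/N$. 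Dropping the bad $\ur$'s from the sum is harmless but not what makes the lemma applicable.
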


\begin{proof}
  We do not reproduce the full proof which is standard. This proof follows the
  method used for instance in the proofs of \cite[Theorem $1.2$]{Munsch}
  or \cite[Theorem $2$]{Zhaoacta} by incorporating the result of
  Lemma~\ref{estimateclose}. 
\end{proof}
 
\begin{remark} It is worth to note that general large sieve bounds have been obtained in \cite[Theorem $1.1$]{MS} in terms of the additive energy of the sequence of moduli. For specific choices of multivariate polynomials it might be possible to efficiently bound this additive energy and compare the resulting bounds with Theorem \ref{generalbound}. \end{remark}
 \begin{remark} For applications the estimate \eqref{power2} over ``good'' moduli $\uq$ is sufficient. Indeed for typical $\uq$ we have $P(\uq) \gg Q^{k+o(1)}$  or in other words the set of ``bad'' moduli has small density (see Lemma \ref{badmoduli} below). \end{remark}

  \section{A Bombieri--Vinogradov-type theorem with polynomial moduli}\label{BVsection}

  In this section, we prove a variant of the well-known
Bombieri--Vinogradov-theorem. Compared to the classical theorem, the version we look at deals
with moduli that are values of polynomials $P$
in several variables.
A first result in that direction has been established as
\cite[Thm.~1.2]{KarinBombieri}, where it is also discussed that
such a variant goes beyond the potential of the classical
Bombieri--Vinogradov-theorem, if the number of variables of $P$
is smaller than its degree. 
We obtain here an improvement of this result 
since it gives an extension of the moduli range and can handle a much
larger class of polynomials $P$ as described below. 
\begin{setting}\label{Psetting}
Let $P\in\Z[x_{1},\dots,x_{\ell}]$ be a 
polynomial of degree $k\geq 2$ with $h_P=1$
that is the product of $m$ irreducible
polynomials $H_{1},\dots,H_{m}\in\Z[x_{1},\dots,x_{\ell}]$,
each $H_{j}$ of degree $k_{j}\geq 1$  where $\ell_{j}$ denotes the number of variables in $H_{j}$. We assume that $h_{H_j}=1$ for all $1\leq j\leq m$ and also require that $r_{P}^*(Q) = Q^{o(1)}$.
Define as in \eqref{eq:rdef}
\begin{equation}
  \label{defrho}
  r=r_{k,\ell}:=\binom{k+\ell}{\ell}-1
  \text{ and } \rho=\rho_{k,\ell}:=r(k+1)/(r(k+1)-1).
\end{equation}
For $k_{j}$ and $\ell_{j}$ define the correspondent $r_{j}:=r_{k_{j},\ell_{j}}$
and $\rho_{j}:=\rho_{k_{j},\ell_{j}}$.
\end{setting}

We obtain the following theorem.
\begin{theorem}[A Bombieri--Vinogradov Theorem with
polynomial moduli]
\label{th:polyBV}  
Let $A,Q,x> 1$ be real,
and $P\in\Z[x_{1},\dots,x_{\ell}]$ a
polynomial of degree $k$ as in Setting~\ref{Psetting}.
Assume that the polynomials $H_{j}$ of degree $k_{j}$
have pairwise disjoint sets of 
\begin{equation}\label{eq:ellcond}
  \ell_{j}\geq \Big(1-\frac{1}{2\rho_{j}}\Big)k_{j}
  = \Big(1+\frac{1}{r_{j}(k_{j}+1)}\Big)\frac{k_{j}}{2}
\end{equation}
many variables for each $j=1,\dots,m$.
Further suppose that
\begin{equation}
\label{eq:QIV}
Q\leq x^{1/(2k+k/2\rho) -\eps}
\end{equation}
for arbitrary $\eps>0$. Let us recall the notation $$\psi(y;P(\uq),a):=\sum_{n\leq y \atop n\equiv a \bmod P(\uq)} \Lambda(n).$$
Then we have the estimate
\[
\sum_{\uq\sim Q} G_{\uq}\frac{\ph(P(\uq))}{Q^{\ell}}
   \sup_{y\leq x} \max_{\substack{a\md P(\uq)\\\gcd(a,P(\uq))=1}}
   |\psi(y;P(\uq),a)-y/\ph(P(\uq))|
   \ll_{A,\ell,k,m,\eps} \frac{x}{(\log x)^{A}},
\]
where 
\[
   G_{\uq}:= \mu^{2}(P(\uq)) \Lambda(H_{1}(\uq))\cdots
   \Lambda(H_{m}(\uq)),
\]
and where the sum runs
over all $\uq$ with $Q\leq q_{i}< 2Q$, $i=1,\dots,\ell$.
\end{theorem}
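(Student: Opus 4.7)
The plan is to follow the classical Bombieri--Vinogradov dissection as carried out in \cite{KarinBombieri}, with the new polynomial large sieve of Theorem \ref{generalbound} as the crucial input, and with a careful splitting of the Type II range driven by the two regimes in which each irreducible factor $H_{j}$ of $P$ is handled.

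First, I would perform the standard reductions. Gallagher's trick removes the $\sup_{y\leq x}$ and the maximum over $a$ coprime to $P(\uq)$ at the cost of a factor $(\log x)^{O(1)}$, replacing $\psi(y;P(\uq),a)-y/\ph(P(\uq))$ by an average over primitive Dirichlet characters $\chi\bmod P(\uq)$ of $|\sum_{n\leq y}\chi(n)\Lambda(n)|$. The weights in $G_{\uq}$ are tailored to this decomposition: $\mu^{2}(P(\uq))$ forces $P(\uq)$ to be squarefree, while each $\Lambda(H_{j}(\uq))$ restricts to $\uq$ for which $H_{j}(\uq)$ is prime (prime-power contributions being negligible). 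Because the $H_{j}$ are irreducible with pairwise disjoint variable sets, we get $P(\uq)=p_{1}\cdots p_{m}$ with distinct primes $p_{j}=H_{j}(\uq_{j})$, and any character $\chi\bmod P(\uq)$ factors as $\chi_{1}\cdots\chi_{m}$ with $\chi_{j}\bmod p_{j}$. The disjointness of variables makes the outer sum split as a product over the factors, which is the structural reason that the separate parameters $(k_{j},\ell_{j})$ enter the hypothesis \eqref{eq:ellcond} rather than just the global $(k,\ell)$.

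Next, apply the Heath--Brown (or Vaughan) identity to $\Lambda(n)$ to decompose the inner character sum into $O(\log^{c}x)$ multilinear pieces, which are sorted into Type I (one long, smooth factor) and Type II (genuinely bilinear) sums. Type I contributions yield a power-of-log saving via a Siegel--Walfisz statement for polynomial moduli, exactly as in \cite{KarinBombieri}. For Type II sums, I would apply the large sieve separately to each factor $H_{j}$ and its variable block $\uq_{j}\sim Q$. Writing $N$ for the length of the associated bilinear variable: for $N$ outside $[Q^{k_{j}},Q^{2k_{j}}]$ the classical Montgomery--Vaughan bound \eqref{eq:classic} in the polynomial form \eqref{eq:trivialdelta} is stronger, while for $N$ inside this interval Theorem~\ref{generalbound} applies and provides a new saving with the $N^{1/\rho_{j}}$-behaviour built into \eqref{power2}. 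The side condition $P(\uq)\gg Q^{k+o(1)}$ required by Theorem~\ref{generalbound} is satisfied on a set of density $1-o(1)$, and the complementary bad moduli can be absorbed by a trivial bound.

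The main obstacle will be balancing all of these sub-ranges of $N$ simultaneously across the $m$ irreducible factors. The exponent $1/(2k+k/2\rho)$ in \eqref{eq:QIV} arises as the largest $Q$ at which the Type II contribution bounded via Theorem \ref{generalbound} still lies within the acceptable budget $x(\log x)^{-A}$, and the hypothesis \eqref{eq:ellcond} is precisely what guarantees that, for each $H_{j}$, the new large sieve strictly improves over the classical one throughout the window $N\in[Q^{k_{j}},Q^{2k_{j}}]$ where it is used; smaller $\ell_{j}$ would make the new bound inert and force the trivial estimate, washing out the gain. Managing the product structure coming from the $m$ factors --- so that the individual savings in each variable block compound rather than being spoiled by the remaining trivial pieces --- is where the ``very careful splitting'' alluded to in the introduction is genuinely needed; the rest of the argument reduces to standard manipulations.
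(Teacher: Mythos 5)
Your overall roadmap (Gallagher's trick, Vaughan's identity into Type I/Type II pieces, Type I via P\'olya--Vinogradov, Type II via Cauchy--Schwarz and a split into regions where the classical bound \eqref{eq:trivialdelta} and the new bound \eqref{power2} are each used) is the right one, and it is essentially the content of the key Lemma~\ref{polyBMVT} in the paper. However, your proposed mechanism for handling the multi-factor structure of $P$ is not what the paper does and, as stated, does not go through.

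You suggest applying the large sieve ``separately to each factor $H_{j}$ and its variable block $\uq_{j}\sim Q$'' and assert that ``the disjointness of variables makes the outer sum split as a product over the factors.'' The $\uq$-sum does factor over disjoint blocks, but the quantity being summed does not: after the character decomposition $\chi=\chi_{1}\cdots\chi_{m}$ with $\chi_{j}\bmod H_{j}(\uq_{j})$, the bilinear piece involves $\sum_{m}c_{m}\prod_{j}\chi_{j}(m)$, a single sum over $m$ in which all the $\chi_{j}$ are evaluated at the same argument. This couples the $j$-blocks, so the expression is not a product of independent per-factor large sieve averages, and one cannot apply Theorem~\ref{generalbound} block-by-block. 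What the paper actually does is the following: for primitive characters $\chi\bmod P(\uq)$ the exact support is a divisor, so the sum is reorganised over the divisor polynomials $\tilde{P}\mid P$ in $\Z[\ux]$ (there are only $O_{m}(1)$ of them, since each $H_{j}(\uq)$ is prime and $P(\uq)$ is squarefree). The whole multivariate large sieve argument, encapsulated in Lemma~\ref{polyBMVT}, is then applied to each nonconstant $\tilde{P}$ as a single polynomial of degree $\tilde{k}$ in $\tilde{\ell}$ variables. The per-factor hypothesis \eqref{eq:ellcond} transfers to $\tilde{P}$ by additivity precisely because the variable sets are disjoint, and the admissible range condition transfers via the monotonicity \eqref{eq:Qexpon}.

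Two further points of precision. First, the paper assumes $r_{P}^{*}(Q)=Q^{o(1)}$ in Setting~\ref{Psetting}, and this is genuinely used in Lemma~\ref{polyBMVT} (e.g.\ in the third region via $\Delta_{k,\ell}(Q,M)\ll r_{P}^{*}(Q)(M+Q^{2k})$); your sketch does not invoke this hypothesis. Second, the condition \eqref{eq:ellcond} does not enter where you say it does. You claim it guarantees that ``the new large sieve strictly improves over the classical one throughout the window'' $N\in[Q^{k_{j}},Q^{2k_{j}}]$. In fact, in the paper's proof of Lemma~\ref{polyBMVT} the hypothesis $\ell\geq(1-1/2\rho)k$ is invoked in the third region, subcase $W=U^{2}$ and $M\leq Q^{2k}$, where \emph{both} $\Delta$-factors are bounded trivially by \eqref{eq:trivialdelta}; there the requirement reduces to $k-\ell<\delta$, which holds because $\delta>k/2\rho$. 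So the role of \eqref{eq:ellcond} is to make the trivial bound acceptable in a specific sub-region, not to guarantee Theorem~\ref{generalbound} wins throughout. Repairing the proposal therefore requires replacing the per-factor product idea by the reduction to divisor polynomials and a careful proof of Lemma~\ref{polyBMVT} (your three-region splitting, with the choice of Vaughan parameter $U=x/Q^{k+H}$ and the constraints \eqref{condH1}, \eqref{eq:eH}, \eqref{eq:Hb} on $H$).
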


 \begin{remark} A condition on the number of variables like \eqref{eq:ellcond} was not appearing
in \cite{KarinBombieri} where the result was stated only for very special polynomials and an uniform large sieve bound (equivalently a single choice of $\Delta_{k,\ell}(Q,N)$) was used in the proof. This new condition appears in Lemma \ref{polyBMVT} below and comes essentially from the use of the standard large sieve bound \eqref{eq:trivialdelta} (which does not depend on $\ell$). It allows us to gain a substantial improvement in the level of distribution in comparison to \cite[Thm.~1.2]{KarinBombieri}, where the $Q$-exponent was around $3k$ whereas the exponent is here around $5k/2$. Note also that \cite[Thm.~1.2]{KarinBombieri} as well as
Theorem~\ref{th:polyBV} is only nontrivial
in the case when $\ell<k$, since otherwise, the
classical Bombieri--Vinogradov-theorem gives a
stronger statement. It is also not hard to see that our proof works also when the condition \eqref{eq:ellcond} is not fulfilled producing a slightly smaller level of distribution. \end{remark}

Since the proof of Theorem~\ref{th:polyBV} follows closely
Section~3 in \cite{KarinBombieri}, we restrict this
presentation to the changes that need to be made.
Briefly, the improvement comes from the sharpened polynomial large sieve
inequality obtained in Theorem~\ref{generalbound}, together with a
very careful case distinction. \\

Unlike in \cite{KarinBombieri} we assume only that the polynomial $P$ is the product of some irreducible
polynomials with some conditions on their variable sets and degrees,
that can be formulated in a much easier way.
The weight $G_{\uq}$ forces the factors $H_{i}(\uq)$
to attain prime values, so that all prime divisors of $P(\uq)$ are
values of polynomials, too. This property allows us to apply the fundamental Lemma \ref{polyBMVT} below to any divisor of the polynomial $P$.

\medskip

 In order to apply Theorem \ref{generalbound}, we need in the proof of Theorem \ref{th:polyBV} to discard the contribution of $\uq$ such that $P(\uq)$ is small. The following simple lemma allows us to control the number of such ``bad'' moduli.
\begin{lemma}\label{badmoduli}
Let $P \in
\mathbb{Z}[X_1,\dots,X_\ell]$ be a polynomial in $\ell$ variables of total degree $k \geq
2$. For every $\varepsilon>0$, we have
$$ \# \left\{ \uq \sim Q, |P(\uq)| \leq \varepsilon Q^k \right\} = O(\varepsilon^{1/k}Q^{\ell}),$$
where the implied constant only depends on $P$, $k$ and $\ell$.
\end{lemma}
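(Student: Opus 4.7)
The plan is to slice the box $\uq\sim Q$ by parallel integer lines in a fixed direction $\mathbf{v}\in\Z^{\ell}$ along which the restriction of $P$ becomes a univariate polynomial of degree exactly $k$ with a nonzero constant leading coefficient; on each line the count is handled by factorisation over $\C$, and summing over lines then yields the bound. First I would pick a primitive $\mathbf{v}=(v_{1},\dots,v_{\ell})\in\Z^{\ell}$ with $P_{k}(\mathbf{v})\neq 0$, where $P_{k}$ denotes the homogeneous degree-$k$ component of $P$; such $\mathbf{v}$ exists because $P_{k}$ is a nonzero integer polynomial, hence nonvanishing on a Zariski-dense subset of $\Z^{\ell}$. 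Since $\mathbf{v}$ is primitive it extends to a $\Z$-basis, giving a decomposition $\Z^{\ell}=L\oplus\Z\mathbf{v}$ for some rank-$(\ell-1)$ sublattice $L$; every $\uq\in\Z^{\ell}$ then has a unique expression $\uq=\uq_{0}+t\mathbf{v}$ with $\uq_{0}\in L$ and $t\in\Z$, and $f_{\uq_{0}}(t):=P(\uq_{0}+t\mathbf{v})$ is a polynomial in $t$ of degree exactly $k$ with leading coefficient the nonzero constant $c:=P_{k}(\mathbf{v})$.

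Next I would factor $f_{\uq_{0}}(t)=c\prod_{i=1}^{k}(t-\beta_{i})$ over $\C$ and observe that the bound $|f_{\uq_{0}}(t)|\leq\varepsilon Q^{k}$ forces $\min_{i}|t-\beta_{i}|\leq (\varepsilon/|c|)^{1/k}Q$, so at most $O_{P}(\varepsilon^{1/k}Q)$ integers $t$ satisfy this on any interval of length $O(Q)$. As $\uq_{0}$ ranges over the $O(Q^{\ell-1})$ cosets of $\Z\mathbf{v}$ meeting the dyadic box, multiplying these two estimates delivers the claimed $O(\varepsilon^{1/k}Q^{\ell})$.

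The only delicate point is the need to slice in a generic direction rather than along a coordinate axis: if no pure monomial $X_{j}^{k}$ appears in $P$ (for instance when $P=X_{1}X_{2}^{k-1}$), then freezing $\ell-1$ coordinates produces a univariate polynomial whose leading coefficient is itself a polynomial in the frozen variables and can be small or vanish, destroying the $\varepsilon^{1/k}$ savings. Choosing $\mathbf{v}$ with $P_{k}(\mathbf{v})\neq 0$ removes this obstacle once and for all, the constant $c$ is harmlessly absorbed into the implied dependence on $P$, and the remainder of the argument is routine.
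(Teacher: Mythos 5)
Your proof is correct and takes essentially the same approach as the paper: you slice along a direction $\mathbf{v}$ where the homogeneous leading form $P_k$ does not vanish, which is precisely what the paper achieves by applying a linear change of variables so that $P$ acquires a pure $\lambda q_1^k$ term with $\lambda\neq 0$, and the single-variable count via factorisation over $\C$ is then identical. Your geometric lattice-coset phrasing and the paper's change-of-variables phrasing are two descriptions of the same reduction.
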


\begin{proof}
We first prove the lemma when $P$ contains a summand $\lambda q_1^k$ with $\lambda \neq 0$. Indeed,  given any choice of $q_2,\dots,q_{\ell} \sim Q$, $P(\uq)$ is a nonzero integer polynomial of degree $k$ in $q_{1}$. We remark by factorizing this polynomial over $\mathbb{C}$ that the inequality $|P(\uq)| \leq \varepsilon Q^k$ forces $q_{1}$ to lie within $O\left(\varepsilon^{1/k} Q\right)$ of one of the (complex) roots of this polynomial. Hence, there is at most $ O(\varepsilon^{1/k}Q^{\ell})$ choices of such bad tuples $\uq$. The general case boils down to this situation after applying a linear change of variables to $P$ of the form $y_1=q_1,y_2=q_2+\lambda_2 q_1,\dots, y_{\ell}=q_{\ell}+\lambda_{\ell}q_1$ for a suitable choice of $(\lambda_2,\dots,\lambda_{\ell})$  depending only on the coefficients of $P$.
\end{proof} Setting $\varepsilon:=\varepsilon(Q)=(\log Q)^{-k(A+m+1)} $ and applying Lemma \ref{badmoduli}, we see that

 \[
\sum_{\uq\sim Q \atop |P(\uq)| \leq \varepsilon Q^k} G_{\uq}\frac{\ph(P(\uq))}{Q^{\ell}}
   \sup_{y\leq x} \max_{\substack{a\md P(\uq)\\\gcd(a,P(\uq))=1}}
   |\psi(y;P(\uq),a)-y/\ph(P(\uq))|
   \ll \frac{x}{(\log x)^{A}},
\] where we trivially bounded $G_{\uq}$ and $\psi(y;P(\uq),a)$. Hence, we can always assume that $P(\uq) \gg Q^{k+o(1)}$ in the rest of the proof and we will omit to precise it in the summations encountered. \\

The next Lemma is the key step in the proof and provides
an improved version of the polynomial version of the mean value
theorem \cite[Thm.~4.1]{KarinBombieri}
respectively \cite[Lemma~5.1]{KarinBombieri}.
\begin{lemma}[Polynomial Mean Value Theorem]
\label{polyBMVT}  
Let $Q,x>1$, 
and $P\in\Z[x_{1},\dots,x_{\ell}]$ be a polynomial of degree $k\geq 2$ with $h_P=1$, $r_P^*(Q)= Q^{o(1)}$
and such that the number of variables verifies $\ell\geq (1-1/2\rho)k$. 
For a character $\chi$ 
  we write $\displaystyle{\psi(x,\chi):=\sum_{n\leq x} \chi(n) \Lambda(n)}$.
  Then we have
\begin{equation}\label{sumcarac}
   \sum_{\uq\sim Q} \frac{P(\uq)}{\ph(P(\uq))}
     \sideset{}{^{*}}\sum_{\chi \bmod P(\uq)} \sup_{y\leq x}|\psi(y,\chi)|
   \ll (Qx)^{o(1)} Q^{\ell}x^{1-\delta'} 
 \end{equation} in the range $Q^{2k+k/2\rho+\varepsilon}\leq x$ for
a sufficiently small $\varepsilon>0$, with some $\delta'>0$
sufficiently small depending on $\varepsilon$. 
\end{lemma}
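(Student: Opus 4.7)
The plan is to follow the Bombieri--Vinogradov-style scheme used for \cite[Theorem~4.1]{KarinBombieri} / \cite[Lemma~5.1]{KarinBombieri}, based on Vaughan's identity, and to replace the key large sieve step by a hybrid bound that interpolates between the classical inequality \eqref{eq:trivialdelta} and the sharpened bound of Theorem~\ref{generalbound}, selected dyadically according to the length of the relevant bilinear sum.

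First, I would apply Vaughan's identity with parameters $U,V\geq 1$ (to be chosen as small powers of $x$). This decomposes $\psi(y,\chi)$ for $y\leq x$ into a contribution from $n\leq U$ (trivially $\ll U\log x$), two type I sums of the shape $\sum_{m\leq UV}\alpha_{m}\chi(m)\sum_{n\leq y/m}\chi(n)\beta_{n}$ with coefficients $\alpha_{m},\beta_{n}\ll(\log x)^{O(1)}$, and a type II (bilinear) sum supported on $V<m\leq UV$, $U<n\leq y/m$. For the type I piece, bound the inner character sum via Polya--Vinogradov by $P(\uq)^{1/2}(\log P(\uq))^{O(1)}$ for non-principal $\chi$; inserted into the full sum and using $P(\uq)\ll Q^{k+o(1)}$ together with $r_{P}^{*}(Q)=Q^{o(1)}$, the total type I contribution is $\ll(Qx)^{o(1)}\,UV\,Q^{\ell+3k/2}$.

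The main work is the type II sum. After dyadic decomposition $M<m\leq 2M$, $N<n\leq 2N$ with $MN\ll x$, $U<M\leq UV$, $V<N\leq x/M$, removing the supremum over $y$ by a standard Perron-type argument, and applying Cauchy--Schwarz in $\chi$ and in $\uq$, the task reduces to bounding
\begin{equation*}
T(Q,N):=\sum_{\uq\sim Q}\frac{P(\uq)}{\varphi(P(\uq))}\sideset{}{^{*}}\sum_{\chi\bmod P(\uq)}\Big|\sum_{N<n\leq 2N}b_{n}\chi(n)\Big|^{2},
\end{equation*}
where $|b_{n}|\ll(\log x)^{O(1)}$. The standard duality between multiplicative and additive characters then yields
\begin{equation*}
T(Q,N)\ll\sum_{\uq\sim Q}\sideset{}{^{*}}\sum_{a\bmod P(\uq)}\Big|\sum_{N<n\leq 2N}b_{n}\mathrm{e}\Big(\frac{an}{P(\uq)}\Big)\Big|^{2}\ll\Delta_{k,\ell}(Q,N)\,N\,(\log x)^{O(1)},
\end{equation*}
for any admissible choice of $\Delta_{k,\ell}(Q,N)$.

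The crucial step is to select $\Delta_{k,\ell}(Q,N)$ adaptively: for $N\leq Q^{k}$ or $N\geq Q^{2k}$ use the classical bound \eqref{eq:trivialdelta}, giving $\Delta_{k,\ell}(Q,N)\ll(QN)^{o(1)}Q^{\ell}(Q^{k}+N)$; for $Q^{k}\leq N\leq Q^{2k}$ invoke Theorem~\ref{generalbound} to obtain $\Delta_{k,\ell}(Q,N)\ll(QN)^{o(1)}Q^{\ell+k/r(k+1)}N^{1-1/r(k+1)}$. The hypothesis $\ell\geq(1-1/(2\rho))k$ is exactly what makes the classical bound acceptable on the two tail ranges, while Theorem~\ref{generalbound} governs the middle range and produces the decisive saving. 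Balancing these contributions against the type I bound confines $U,V$ to a feasible window, which exists precisely when $Q^{2k+k/(2\rho)+\eps}\leq x$; the threshold exponent $2k+k/(2\rho)$ emerges by equating the middle-range contribution (evaluated at the endpoint $N=Q^{2k}$) to the type I bound. The main technical obstacle is this simultaneous optimization: several competing constraints coming from different dyadic ranges must be satisfied at once, and the admissibility of the classical bound on the tails is exactly what forces the condition on $\ell$.
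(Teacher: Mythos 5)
Your high-level plan does match the paper's: Vaughan's identity, P\'olya--Vinogradov for the Type~I contribution, then Cauchy--Schwarz and a dyadic decomposition for the bilinear piece, with $\Delta_{k,\ell}$ chosen adaptively between the classical bound \eqref{eq:trivialdelta} and Theorem~\ref{generalbound} according to the dyadic length. However there are two concrete issues that prevent the proposal, as written, from yielding the claimed threshold.

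First, the branch of \eqref{eq:trivialdelta} you quote for the outer ranges is the wrong one where it matters most. You write $\Delta_{k,\ell}(Q,N)\ll(QN)^{o(1)}Q^{\ell}(Q^{k}+N)$ for $N\geq Q^{2k}$. But on that range the useful branch is $r_{P}^{*}(Q)(Q^{2k}+N)\ll Q^{o(1)}N$, which beats $Q^{\ell}N$ by the full factor $Q^{\ell}$; this is precisely where the hypothesis $r_{P}^{*}(Q)=Q^{o(1)}$ earns its keep. If you carry your version of the bound through the third region (large $M$), the resulting constraint on the Vaughan parameter is $H>k/2+\delta/2$ rather than the paper's $H>\delta/2+(k-\ell)/2$, and when combined with the necessary $H/\rho<\delta$ and $H>k/2$ this pushes the admissible range to $Q^{2k+k/(2\rho-1)}\leq x$ rather than $Q^{2k+k/(2\rho)}\leq x$. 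So the weaker branch actually loses the theorem.

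Second, the ``simultaneous optimization'' you flag as the main obstacle is not just a technical loose end: the paper's entire proof consists of it. One has to fix $U=x/Q^{k+H}$, split $[U,W]$ into three regions at the break points $Q^{k}$ and $x/Q^{k}$, use a \emph{product} of two large-sieve factors $\Delta_{k,\ell}(Q,M)^{1/2}\Delta_{k,\ell}(Q,x/M)^{1/2}$ (your $T(Q,N)$ step needs to be applied to both ranges $M$ and $x/M$ separately, with possibly different choices of $\Delta$ for each), and collect the constraints $k/2<H<k$, $H/\rho<\delta$, and $H>\delta/2+(k-\ell)/2$. The hypothesis $\ell\geq(1-1/(2\rho))k$ enters only in the third region (where $M$ is large and $x/M$ is short), not ``on the two tail ranges'' generically; and the threshold $2k+k/(2\rho)$ emerges from requiring $k/2<H<\rho\delta$ to have solutions, with the relevant endpoint being $Q^{k+H}$ rather than $Q^{2k}$. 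These details are exactly where the lemma is won or lost, and they need to be executed, not asserted.
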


\begin{proof}
  Without loss of generality we can assume that $x$ lies in the interval
  $Q^{2k+\delta_{1}}\leq x\leq Q^{2k+\delta}$
  for some parameters $k/2\rho<\delta_{1}<\delta$.
  We can also assume that $\delta<k$, since  if $x\geq Q^{3k}$,
  the assertion follows already from \cite[Lemma~5.1]{KarinBombieri}.
  We follow the proof of \cite[Thm.~4.1]{KarinBombieri} and apply
  Vaughan's identity. To do so, let us introduce the parameter $U$ as
 \begin{equation}\label{eq:choiceU}
   U:=x/Q^{k+H} \end{equation}
 for an appropriate parameter $0<H<k$ to be chosen later.
 To obtain \eqref{sumcarac}, it suffices to find an upper bound for
  
\[S_1:=   \sum_{\uq\sim Q} \frac{P(\uq)}{\ph(P(\uq))} 
\sideset{}{^{*}}\sum_{\chi \bmod P(\uq)}\sum_{r\leq U }
\max_w \Big|\sum_{w<s\leq x/r}\chi(sr)  \Big| \]
  and 
  \[S_{M}:= \sum_{\uq\sim Q} \frac{P(\uq)}{\ph(P(\uq))} 
  \sideset{}{^{*}}\sum_{\chi \bmod P(\uq)} \Big| \sum_{m\sim M}
   \sum_{s\leq x/m} a(m)b(s)\chi(sm)\Big|,
 \] for any $M$ in the interval $U\leq M\leq W:=\max(U^{2},x/U)$. Here
 $a(n)$ and $b(n)$ are arithmetic functions depending on $U$ only
 and such that $ \vert b(n)\vert \leq  \tau(n)$,
 $\vert  a(n)\vert \leq \log n$ for all $n\in \mathbb{N}$. \\ 
 
 We bound $S_1$ using the P\'{o}lya--Vinogradov inequality and obtain
 \[
   S_1 \ll UQ^{\ell+3k/2+o(1)}=xQ^{\ell+k/2-H+o(1)}.
\]
   To ensure that $S_1 \ll Q^\ell x^{1-\delta'}$ for some $\delta'>0$, we need 
\begin{equation}\label{condH1}  k/2<H<k. \end{equation}
Bounding $S_M$ requires a more careful analysis and
the factor $\Delta_{k,\ell}(Q,x)$ appearing in the large sieve inequality
\eqref{largesieve} comes into play. Applying \cite[Lemma~3.2]{KarinBombieri},
which follows essentially from the inequality of Cauchy--Schwarz, we obtain
\begin{multline}
   \label{eq:eqI}
    S_{M}\ll Q^{o(1)}(\Delta_{k,\ell}(Q,M)M)^{1/2}
   (\Delta_{k,\ell}(Q,x/M)x/M)^{1/2}\\
   =Q^{o(1)}x^{1/2} \Delta_{k,\ell}(Q,M)^{1/2} \Delta_{k,\ell}(Q,x/M)^{1/2}.
 \end{multline}

We split the interval $[U,W]$ into
 three subintervals with boundaries at $Q^{k}$ and $x/Q^{k}$.
 In each of these subintervals, which we 
 call here regions, we estimate $S_{M}$ using different large sieve inequalities, or equivalently different values of $\Delta_{k,\ell}(Q,x)$ coming both from Theorem \ref{generalbound} and the trivial bound \eqref{eq:trivialdelta}. \\
 
 \textbf{First region:} $M\in [U,Q^{k}]$. \\

We apply the standard large sieve bound \eqref{eq:trivialdelta}
 \[\Delta_{k,\ell}(Q,M)\ll Q^{\ell}M+Q^{\ell+k}\ll Q^{\ell+k}.\]
 Since $Q^{k}\leq x/Q^{k}\leq x/M\leq x/U= Q^{k+H}\leq Q^{2k}$, 
 we can apply Theorem~\ref{generalbound} and take
 $\Delta_{k,\ell}(Q,x/M)=Q^{\ell+k/r(k+1)}(x/M)^{1-1/r(k+1)}$.
 Hence, it follows from \eqref{eq:eqI} that
 \begin{multline*}
   S_{M}\ll Q^{o(1)}x^{1/2} Q^{(\ell+k)/2}\cdot
   Q^{\ell/2+k/2r(k+1)} (x/M)^{1/2-1/2r(k+1)} \\
   \ll  Q^{o(1)}x^{1/2} Q^{\ell+k/2 +k/2r(k+1)
     +(k+H)(1/2-1/2r(k+1)) },
   \end{multline*}  
   where we used that $x/M\leq Q^{k+H}$ holds in this region.
  Inserting $x\leq Q^{2k+\delta}$, we have $S_M \ll Q^{\gamma+o(1)}$ where
   \[\gamma:= \ell+\frac{2k+\delta}{2}
      + \frac{k}{2}
     + \frac{k+H}{2}
     - \frac{H}{2r(k+1)}. \]
   To obtain \eqref{sumcarac} we need to show that
   $\gamma < \ell+2k+\delta_1$.
   Choosing $\delta$ such that $\delta-\delta_1$ is sufficiently small,
   it suffices to show that $\gamma < \ell+2k+\delta$.
   This holds true as soon as 
   \begin{equation*}
 2k +\frac{\delta}{2}  +\frac{H}{2}  - \frac{H}{2r(k+1)} < 2k+\delta,
\end{equation*} 
i.e.\ if
\begin{equation}
  \label{eq:eH}
  H \Big(1-\frac{1}{r(k+1)}\Big)= H/\rho <  \delta.
\end{equation}

Under this additional restriction on $H$, this implies
 $S_{M} \ll Q^{\ell}x^{1-\delta'}$ for any sufficiently small $\delta'>0$. \\
   
   \textbf{Second region}: $M\in[Q^{k},x/Q^{k}]$ .\\
   
   In this region we have $Q^{k}\leq x/M \leq x/Q^{k}\leq Q^{2k}$,
   since we assumed $x\leq Q^{3k}$. Then Theorem~\ref{generalbound}
   applies to both the sum over the intervals of size $M$ and $x/M$.
   From \eqref{eq:eqI}, this yields the bound
 \begin{multline*}
   S_{M}\ll Q^{o(1)}x^{1/2} \Delta_{k,\ell}(Q,M)^{1/2}\Delta_{k,\ell}(Q,x/M)^{1/2}\\
   \ll Q^{o(1)}x^{1/2}Q^{\ell+k/r(k+1)}M^{1/2-1/2r(k+1)} (x/M)^{1/2-1/2r(k+1)}\\
   =Q^{o(1)} x^{1-1/2r(k+1)} Q^{\ell+k/r(k+1)}
   = Q^{o(1)}xQ^{\ell} (Q^{2k}/x)^{1/2r(k+1)},
 \end{multline*}
 and having $Q^{2k+\delta_{1}}\leq x$, the last term is
 $\ll Q^{\ell}x^{1-\delta'}$ for any sufficiently small $\delta'>0$,
 as was to be shown. \\
   
   \textbf{Third region:}  $M\in[x/Q^{k},W]$ with $W=\max\{U^{2},x/U\} $. \\

 $\bullet$  Consider first the case $W=x/U$, so that
 $M\leq W\leq Q^{2k}$ by the choice of $U$. Theorem~\ref{generalbound}
 can be applied for the sum over $m\sim M$, and we use the standard bound
 $\Delta_{k,\ell}(Q,x/M)\ll Q^{\ell+k}$ following from \eqref{eq:trivialdelta} and $x/M\ll Q^{k}$.
 Remark that this situation is symmetric to the one in the first region.
 Therefore we use \eqref{eq:eqI} from above which yields the bound
\begin{multline*}
  S_{M}\ll Q^{o(1)}x^{1/2}\Delta_{k,\ell}(Q,M)^{1/2}
  \Delta_{k,\ell}(Q,x/M)^{1/2}\\
  \ll Q^{o(1)} x^{1/2}Q^{(\ell+k)/2}\cdot Q^{\ell/2+k/2r(k+1)}
  M^{1/2-1/2r(k+1)}.
\end{multline*}

Now inserting $M\leq W=x/U= Q^{k+H}$, we
get the same upper bound for $S_{M}$
as in the first region, so we are done in this case by symmetry.

$\bullet$   If otherwise $W=U^{2}$,
then we  use again the standard bound $\Delta_{k,\ell}(Q,x/M)\ll Q^{\ell+k}$ from \eqref{eq:trivialdelta}.
Also from \eqref{eq:trivialdelta}, we know the bound
\[\Delta_{k,\ell}(Q,M)\ll r_P^*(Q)(M+Q^{2k}) \ll Q^{o(1)}(M+Q^{2k}).\]

We split further the discussion into two subcases depending on
whether $M \leq Q^{2k}$ or not. In the former case we have 
\[S_M \ll Q^{o(1)}x^{1/2}
  \Delta_{k,\ell}(Q,M)^{1/2}\Delta_{k,\ell}(Q,x/M)^{1/2}
  \ll Q^{o(1)}x^{1/2}Q^{k} Q^{(\ell+k)/2}.
\]
Inserting $x\leq Q^{2k+\delta}$, we have $S_M \ll Q^{\gamma_1+o(1)}$ where
\[\gamma_1:= \frac{1}{2}(3k+\delta-\ell)+\ell+k.\]
 To infer \eqref{sumcarac}, we argue as in the first region and thus we
need to show that  $\gamma_1<\ell+2k+\delta$.
A quick computation reveals that this is equivalent to 
\[
  k-\ell <  \delta. 
\]
This holds true under the hypothesis $\ell\geq (1-1/2\rho)k$
and $\delta>k/2\rho$. \\

In the latter case $M \geq Q^{2k}$, recall that
$M\ll U^{2}=x^{2}Q^{-2k-2H}$. As before,
using \eqref{eq:eqI}, we arrive at

\[
  S_{M}\ll Q^{o(1)} x^{1/2}Q^{(\ell+k)/2}M^{1/2}\ll Q^{o(1)} x^{1/2}
  Q^{(\ell+k)/2} xQ^{-k-H}.
\]
Inserting $x\leq Q^{2k+\delta}$, we have $S_M \ll Q^{\gamma_2+o(1)}$ where
\[
  \gamma_2:= 2k+\delta+\ell/2+k/2 + \delta/2-H.
\]
Arguing as above, to deduce \eqref{sumcarac},
we need to show that  $\gamma_2<\ell+2k+\delta$. This holds if
\[
   k/2+\delta/2-H < \ell/2,
\]
i.e.\
\begin{equation}
  \label{eq:Hb}
  H> \delta/2+(k-\ell)/2.
\end{equation} Under this additional restriction on $H$, this implies
 $S_{M} \ll Q^{\ell}x^{1-\delta'}$ for any sufficiently small $\delta'>0$. \\
   
   To finish the proof we need to choose the parameter $H$ subject to
   the restrictions \eqref{condH1}, \eqref{eq:eH} and \eqref{eq:Hb}.
   The hypothesis $\delta_1 >k/2\rho$ ensures that both \eqref{condH1}
   and \eqref{eq:eH} can hold together. Now under the condition
   $\ell\geq (1-1/2\rho)k$ we have $\delta/2+(k-\ell)/2 < \delta$
   so \eqref{eq:Hb} holds too. This concludes the proof.

\end{proof}

\subsection{Conjectural level of distribution}
The question arises of the level of distribution that can be expected under the assumption
of the multidimensional analogue \eqref{eq:Zhaoconj} of Zhao's conjecture. We give some hints
about the changes to be made under this assumption. The application of Theorem~\ref{generalbound} in the proof of Lemma~\ref{polyBMVT}  
is then replaced by the application of \eqref{eq:Zhaoconj}.
Hence in the first region, we get 
$\Delta_{k,\ell}(Q,x/M)\ll (Q^{\ell+k}+Q^{k+H})Q^{o(1)}\ll Q^{k+\max\{\ell,H\}}Q^{o(1)}$
and
$\gamma=k+\delta/2+(k+\ell)/2+k/2+\max\{\ell,H\}/2$, which is
$<\ell+2k+\delta$ if we choose
\begin{equation}
  \label{eq:Hzhao}
  H<\ell+\delta.
\end{equation}
This new condition on $H$ replaces condition \eqref{eq:eH}.
In the third region with $W=U^2$, the bound $\Delta_{k,\ell}(Q,M) \ll (Q^{\ell+k}+M)Q^{o(1)}$ leads to the new subcase distinction $M\leq Q^{\ell+k}$ and $M>Q^{\ell+k}$. In the former case,
$\gamma_{1}=\ell+2k+\delta/2$ is already admissible, and the treatment
of the second case does not change.  To guarantee that $H$ can be chosen subject to
the restrictions \eqref{condH1}, \eqref{eq:Hb} and \eqref{eq:Hzhao} 
we need that $\delta+\ell>k/2$ as well as
$\delta+\ell>\delta/2+(k-\ell)/2$, leading to
$\delta>\max\{k/2-\ell,k-3\ell\}$. Therefore when $\ell \geq k/2$, we can choose any $\delta>0$ and \eqref{eq:Zhaoconj} leads to an analogue of Lemma~\ref{polyBMVT} in the range
\begin{equation}
  \label{eq:ZhaoQbound}
Q^{2k+\eps}\leq x
\end{equation} reaching the optimal expected range. It is worth to notice that the method for $\ell=1$ seems to lead to a Bombieri--Vinogradov theorem in the range $Q^{2k+\eps} \leq x$ only when $k=2$ (it was also proved unconditionally by Baker \cite{Baker2}), while heuristically this range should be reached for any degree.

\bigskip
\subsection{Proof of Theorem~\ref{th:polyBV}}
  Due to the assigned weight $G_{\uq}$, each $P(\uq)$
  on the left hand side of the assertion is squarefree.
  The tuple $\uq$ has therefore the property that each
  $H_{j}(\uq)$ equals to a prime, and these
  primes $H_{1}(\uq),\dots,H_{m}(\uq)$ are pairwise different.

  Therefore, a divisor $d$ of $P(\uq)$ must be
  $d=1$ or $d=\tilde{P}(\uq)$
  for some polynomial $\tilde{P}$ that divides $P$ in $\Z[\ux]$.

  Following the proof of \cite[Thm.~1.2]{KarinBombieri},
  our task is reduced to give a proof of the bound
  \begin{equation}
    \label{eq:Gqsum}
     \sum_{\substack{\tilde{P}\mid P\\\tilde{P}\neq 1}}\sum_{\uq\sim Q} G_{\uq} Q^{-\ell}
     \sideset{}{^{*}}\sum_{\chi \bmod \tilde{P}(\uq)}
     \sup_{y\leq x}|\psi(y,\chi)| \ll x^{1-\delta}
   \end{equation}
   for any small $\delta>0$,
   where $\chi$ runs through the primitive characters
   mod $\tilde{P}(\uq)$, denoted by the star at the sum.
   
   The key observation is that  Lemma~\ref{polyBMVT} can be applied 
   to each nonconstant $\tilde{P}\mid P$ 
  (remark that for divisors of degree $1$, it follows directly from the classical Bombieri--Vinogradov theorem). Indeed, we need to verify the hypotheses of Lemma~\ref{polyBMVT}. First of all, a short calculation shows that
   \begin{equation}
     \label{eq:Qexpon}
     1/(2k+k/2\rho)\leq 1/(2\tilde{k}+\tilde{k}/2\tilde{\rho})
   \end{equation}
   holds true for all nonconstant $\tilde{P}\mid P$ with corresponding
     degree $\tilde{k}\leq k$. Hence the variable $Q$ belongs to the admissible range. Denote by $\tilde{\ell}$ the number of variables of $\tilde{P}$. Our assumption \eqref{eq:ellcond} and the fact that we know that the $H_{j}$ have disjoint sets of variables implies by additivity
     \[\tilde{\ell}\geq \Big(1-\frac{1}{2\tilde{\rho}}\Big)\tilde{k}
       = \Big(1+\frac{1}{\tilde{r}(\tilde{k}+1)}\Big)\frac{\tilde{k}}{2}
     \] for any nonconstant factor $\tilde{P}\mid P$.
        
   Hence, the application of Lemma~\ref{polyBMVT} is possible and we can bound
    the left hand side of \eqref{eq:Gqsum} by
   \[ \sum_{\tilde{P}\mid P} x^{1-\delta'}(\log x)^{m}\ll_{m} x^{1-\delta}
   \]
   for any small $\delta>0$, 
   where we used that there are only $\ll_{m}1$ many divisor polynomials
   of $P$ in $\Z[\ux]$.  The theorem follows.

\subsection{Choice of admissible polynomials}\label{adm}
In order to be meaningful, our result needs to be applied to polynomials $H_j$ taking a lot of prime values. In one variable, the Bunyakovsky conjecture states that, under mild conditions, any irreducible polynomial should take infinitely many prime values. Its quantitative version, the  Bateman--Horn conjecture, predicts the frequency of prime values in polynomial sequences. Even though such a statement remains widely open for any fixed polynomial of degree greater than $2$, it has been recently proved that Bateman-Horn conjecture holds for $100\%$ of polynomials \cite{SoSko}. For polynomials of several variables, the situation is different and only a few examples of polynomials taking infinitely many prime values are known. For instance, this has been proved using sieve methods by Friedlander and Iwaniec \cite{FIW} for $x^2+y^4$ or by Heath-Brown \cite{HB} for $x^3+2y^3$.  \\

The reader might wonder if any such good choice is possible for our problem. Indeed, we can choose the polynomials $H_j$ explicitly as norm form polynomials,  i.e.\ $H_{j}(\ux_{j})=N_{K}(\ux_{j})$,
for a certain number field $K$ over $\mathbb{Q}$ and a set of
$\ell_{j}$ many variables. Inspired by the aforementioned works, Maynard proved in \cite{Maynard} that these polynomials takes the expected number of prime values when $\ell_{j}\geq 3k_{j}/4$. Precisely, the number
of $\uq_{j}\sim Q$ such that $H_{j}(\uq_{j})=N_{K}(\uq_{j})$ is prime is
$\gg Q^{\ell}/\log Q$ by \cite[Thm.~1.1]{Maynard}.
(Note that our condition
$\ell_{j}\geq (1-1/2\rho_{j})k_{j}$ holds true assuming $\ell_{j}>3k_{j}/4$.) 

Thus, this result shows that for this choice of polynomials the number of moduli that are admissible in Theorem~\ref{th:polyBV} is
$\gg Q^{\ell}/\log Q$. This prevents Theorem~\ref{th:polyBV} to follow trivially from an application of the classical Bombieri--Vinogradov-Theorem. Furthermore these polynomials verify the hypothesis $r_{N_K}^*(Q) \ll Q^{o(1)}$ of Setting~\ref{Psetting} on the number of possible repetitions. Indeed, the specific structure of the polynomials $N_K$ written as a norm allows to apply the divisor bound in number fields \footnote{See also the discussion on mathoverflow: https://mathoverflow.net/questions/68437/the-divisor-bound-in-number-fields} \cite[Proposition $2.5$]{Chang} or more precise results by Schmidt \cite[Theorem $3$]{Schmidt} on norm form equations. As a consequence, let us mention the following result: 

\begin{cor}\label{Maynardcor} Let $n, k$ be positive integers. Let $f \in  \mathbb{Z}[X]$ be a monic irreducible
polynomial of degree $n$ with root $\omega \in \mathbb{C}$. Let $K = \mathbb{Q}(\omega)$ be the corresponding number field of degree $n$, and let $N_{K} \in  \mathbb{Z}[X_1,\dots,X_{n-k}]$ be the incomplete norm form
\[
  N_K(q_1,\dots,q_{n-k})=N_{K/\mathbb{Q}}\left( \sum_{i=1}^{n-k} q_i \omega^{i-1} \right).
\]
Then if $n \geq 4k$, there exist infinitely many primes $p$ such that $p-1$ has a prime divisor $d=N_{K}(q_1,\dots,q_{n-k})$ of size $\gg p^{2/5+o(1)}.$

\end{cor}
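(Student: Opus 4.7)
The plan is to apply Theorem~\ref{th:polyBV} to the single polynomial $P = H_{1} := N_{K}$ (so $m = 1$, with degree $n$ and $n - k$ variables), combine it with Maynard's density theorem~\cite{Maynard} to produce many $\uq \sim Q$ with $N_{K}(\uq)$ prime, and then extract from this a prime $p \leq x$ for which $N_{K}(\uq) \mid p - 1$.

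First, I would verify the hypotheses of Setting~\ref{Psetting} and Theorem~\ref{th:polyBV}. The leading-coefficient condition $h_{N_{K}} = 1$ follows because $f$ is monic: setting $q_{2} = \dots = q_{n-k} = 0$ gives $N_{K/\Q}(q_{1}) = q_{1}^{n}$, so the coefficient of $q_{1}^{n}$ in $N_{K}$ equals $1$. The bound $r_{N_{K}}^{*}(Q) = Q^{o(1)}$ is exactly the point made in Section~\ref{adm}, and rests on Schmidt's theorem on norm form equations~\cite{Schmidt} together with the divisor bound in the number ring of $K$~\cite{Chang}. Finally, the hypothesis $n \geq 4k$ gives $n - k \geq 3n/4 > (1 - 1/(2\rho))n$, which simultaneously supplies Maynard's condition from \cite[Thm.~1.1]{Maynard} and is stronger than the variable bound \eqref{eq:ellcond}.

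Next, I would set $Q := x^{1/(2n + n/(2\rho)) - \eps}$ with a small $\eps > 0$, placing $Q$ just inside the level of distribution \eqref{eq:QIV}. Maynard's theorem yields $\sum_{\uq \sim Q} G_{\uq} \gg Q^{n-k} \log Q$; meanwhile, applying Theorem~\ref{th:polyBV} with a sufficiently large value of $A$ both at $y = x$ and at $y = x/2$ and then a Markov-type argument discards a sparse exceptional set. What remains is at least one $\uq$ with $N_{K}(\uq)$ prime and $\psi(x; N_{K}(\uq), 1) - \psi(x/2; N_{K}(\uq), 1) > 0$, producing a prime $p \in (x/2, x]$ such that the prime modulus $d := N_{K}(\uq)$ divides $p - 1$.

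The final arithmetic is that, since $\rho > 1$, we have
\[
d \asymp Q^{n} = x^{1/(2 + 1/(2\rho)) - n\eps} \gg x^{2/5 + o(1)} \gg p^{2/5 + o(1)},
\]
using $p \leq x$; letting $x \to \infty$ yields infinitely many such primes. The main obstacle along the way is verifying the bound $r_{N_{K}}^{*}(Q) = Q^{o(1)}$ for the incomplete norm form, which is a genuinely number-theoretic input from~\cite{Chang,Schmidt}; everything else is a routine combination of Theorem~\ref{th:polyBV}, Maynard's density result, and the standard passage from a Bombieri--Vinogradov estimate to the existence of a prime in a given arithmetic progression.
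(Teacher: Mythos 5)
Your proposal is correct and takes essentially the same route as the paper: the paper's proof is a brief sketch citing the standard method from Baier--Zhao \cite{BZsparse}, considering the weighted sum $\sum_{x<n\leq 2x}\Lambda(n+1)\sum_{\uq\sim Q,\ N_K(\uq)\mid n}\Lambda(N_K(\uq))$ and switching summation, then invoking Maynard's count of prime values and Theorem~\ref{th:polyBV}; you carry out the same strategy, just with the hypothesis-verification and the Markov step written out explicitly rather than delegated to the reference.
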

\begin{proof}
  The method to prove such a result from a Bombieri--Vinogradov theorem is classical and can be found for instance in \cite[Thm.~5]{BZsparse}. Hence, we will not give all the details. Mainly speaking,  it is sufficient to consider the following sum
\[
  \sum_{x<n\leq 2x} \Lambda(n+1) \sum_{\uq  \sim Q \atop N_K(\uq) \mid n} \Lambda(N_K(\uq))
\]
  and switch the summation. Then \cite[Thm.~1]{Maynard} asserts that the polynomial $N_K$ takes the expected number of prime values and we can conclude using Theorem \ref{th:polyBV} in the admissible range $Q \leq x^{2/5n+o(1)}.$
\end{proof}

 A similar result could have been stated for primes $p$ such that $p-1$ has a prime divisor $d=x^3+2y^3 \gg p^{72/179+o(1)}$ using Heath-Brown's result combined with Theorem \ref{th:polyBV}. Here the exponent $72/179$ comes from the explicit computation of $1/(2k+k/2\rho)$  in Theorem \ref{th:polyBV} and gives a slight improvement over $2/5$.


\section*{Acknowledgements}
The second author would like to thank the Max Planck Institute for
Mathematics, Bonn, and the University of D\"{u}sseldorf  for support and hospitality during his work on this project.  The second author also acknowledges support of the Austrian Science Fund (FWF), stand-alone project P 33043  ``Character sums, L-functions and applications''.

\section*{Data availability statement}

Data sharing not applicable to this article as no datasets were generated or analysed during the current study.


\begin{thebibliography}{10}

\bibitem{BZsparse}
S.~Baier and L.~Zhao.
\newblock Bombieri-{V}inogradov type theorems for sparse sets of moduli.
\newblock {\em Acta Arith.}, 125(2):187--201, 2006.

\bibitem{Baker2}
R.~Baker.
\newblock Primes in arithmetic progressions to spaced moduli. {III}.
\newblock {\em Acta Arith.}, 179(2):125--132, 2017.

\bibitem{BakerPP1}
R.~C. Baker.
\newblock Primes in arithmetic progressions to spaced moduli.
\newblock {\em Acta Arith.}, 153(2):133--159, 2012.

\bibitem{MS}
R.~C. Baker, M.~Munsch and I.~E. Shparlinski.
\newblock Additive energy and a large sieve inequality for sparse sequences.
\newblock {\em Preprint}, https://arxiv.org/abs/2103.12659.

\bibitem{BPS}
W.~D. Banks, F.~Pappalardi, and I.~E. Shparlinski.
\newblock On group structures realized by elliptic curves over arbitrary finite
  fields.
\newblock {\em Exp. Math.}, 21(1):11--25, 2012.

\bibitem{BDG}
J.~Bourgain, C.~Demeter, and L.~Guth.
\newblock Proof of the main conjecture in {V}inogradov's mean value theorem for
  degrees higher than three.
\newblock {\em Ann. of Math. (2)}, 184(2):633--682, 2016.

\bibitem{FermatBFKS}
J.~Bourgain, K.~Ford, S.~V. Konyagin, and I.~E. Shparlinski.
\newblock On the divisibility of {F}ermat quotients.
\newblock {\em Michigan Math. J.}, 59(2):313--328, 2010.

\bibitem{Chang}
M.~Chang.
\newblock Factorization in generalized arithmetic progressions and application
  to the {E}rd{\H{o}}s-{S}zemer{\'e}di sum-product problems.
\newblock {\em Geometric Functional Analysis GAFA}, 13(4):720--736, 2003.

\bibitem{CGOS}
J.~Cilleruelo, M.~Z. Garaev, A.~Ostafe, and I.~E. Shparlinski.
\newblock On the concentration of points of polynomial maps and applications.
\newblock {\em Math. Z.}, 272(3-4):825--837, 2012.

\bibitem{FIW}
J.~Friedlander and H.~Iwaniec.
\newblock The polynomial {$X^2+Y^4$} captures its primes.
\newblock {\em Ann. of Math. (2)}, 148(3):945--1040, 1998.

\bibitem{Gall}
P.~X. Gallagher.
\newblock The large sieve and probabilistic {G}alois theory.
\newblock In {\em Analytic number theory ({P}roc. {S}ympos. {P}ure {M}ath.,
  {V}ol. {XXIV}, {S}t. {L}ouis {U}niv., {S}t. {L}ouis, {M}o., 1972)}, pages
  91--101, 1973.

\bibitem{GuoZh}
S.~Guo and R.~Zhang.
\newblock On integer solutions of {P}arsell-{V}inogradov systems.
\newblock {\em Invent. Math.}, 218(1):1--81, 2019.

\bibitem{Karinquart}
K.~Halupczok.
\newblock Large sieve inequalities with general polynomial moduli.
\newblock {\em Q. J. Math.}, 66(2):529--545, 2015.

\bibitem{KarinBombieri}
K.~Halupczok.
\newblock A {B}ombieri-{V}inogradov theorem with products of {G}aussian primes
  as moduli.
\newblock {\em Funct. Approx. Comment. Math.}, 57(1):77--91, 2017.

\bibitem{AAKarin}
K.~Halupczok.
\newblock Bounds for discrete moments of {W}eyl sums and applications.
\newblock {\em Acta Arith.}, 194(1):1--28, 2020.

\bibitem{HB}
D.~R. Heath-Brown.
\newblock Primes represented by {$x^3+2y^3$}.
\newblock {\em Acta Math.}, 186(1):1--84, 2001.

\bibitem{Kerrboxes}
B.~Kerr.
\newblock Solutions to polynomial congruences in well-shaped sets.
\newblock {\em Bull. Aust. Math. Soc.}, 88(3):435--447, 2013.

\bibitem{Matomaki}
K.~Matom\"{a}ki.
\newblock A note on primes of the form {$p=aq^2+1$}.
\newblock {\em Acta Arith.}, 137(2):133--137, 2009.

\bibitem{Maynard}
J.~Maynard.
\newblock Primes represented by incomplete norm forms.
\newblock {\em Forum Math. Pi}, 8:e3, 128, 2020.

\bibitem{Mer}
J.~Merikoski.
\newblock On the largest square divisor of shifted primes.
\newblock {\em Acta Arith.}, 196(4):349--386, 2020.

\bibitem{MoVau}
H.~L. Montgomery and R.~C. Vaughan.
\newblock The large sieve.
\newblock {\em Mathematika}, 20(2):119--14, 1973.

\bibitem{Munsch}
M.~Munsch.
\newblock A large sieve inequality for power moduli.
\newblock {\em Acta Arith.}, 197(2):207--211, 2021.

\bibitem{PPW}
S.~T. Parsell, S.~M. Prendiville, and T.~D. Wooley.
\newblock Near-optimal mean value estimates for multidimensional {W}eyl sums.
\newblock {\em Geom. Funct. Anal.}, 23(6):1962--2024, 2013.

\bibitem{Schmidt}
W.~M. Schmidt.
\newblock The number of solutions of norm form equations.
\newblock {\em Transactions of the American Mathematical Society},
  317(1):197--227, 1990.

\bibitem{IgorFermat}
I.~E. Shparlinski.
\newblock Fermat quotients: exponential sums, value set and primitive roots.
\newblock {\em Bull. Lond. Math. Soc.}, 43(6):1228--1238, 2011.

\bibitem{SZell}
I.~E. Shparlinski and L.~Zhao.
\newblock Elliptic curves in isogeny classes.
\newblock {\em J. Number Theory}, 191:194--212, 2018.

\bibitem{SoSko}
A.~N. Skorobogatov and E.~Sofos.
\newblock Schinzel hypothesis with probability 1 and rational points.
\newblock {\em Preprint}, https://arxiv.org/abs/2005.02998.

\bibitem{Woo}
T.~D. Wooley.
\newblock Vinogradov's mean value theorem via efficient congruencing.
\newblock {\em Ann. of Math. (2)}, 175(3):1575--1627, 2012.

\bibitem{Zhaoacta}
L.~Zhao.
\newblock Large sieve inequality with characters to square moduli.
\newblock {\em Acta Arith.}, 112(3):297--308, 2004.

\end{thebibliography}
\end{document}